\theoremstyle{plain}
\newtheorem{Lemma}{Lemma}
\newtheorem{Thm}[Lemma]{Theorem}
\newtheorem*{Thm*}{Theorem}
\newtheorem{Prop}[Lemma]{Proposition}
\theoremstyle{definition}
\newtheorem{Def}[Lemma]{Definition}
\theoremstyle{remark}
\newtheorem{Remark}[Lemma]{Remark}
\numberwithin{Lemma}{section}
\numberwithin{equation}{section}
\newcommand{\BBB}{\mathcal{B}}
\newcommand{\CCC}{\mathcal{C}}
\newcommand{\FFF}{\mathcal{F}}
\newcommand{\PPP}{\mathscr{P}}
\newcommand{\Fa}{\mathfrak{a}}
\newcommand{\Fb}{\mathfrak{b}}
\newcommand{\Fm}{\mathfrak{m}}
\newcommand{\FS}{\mathfrak{S}}
\newcommand{\II}{{\mathbb{I}}}
\newcommand{\NN}{{\mathbb{N}}}
\newcommand{\WW}{{\mathbb{W}}}
\DeclareMathOperator{\id}{id}
\DeclareMathOperator{\Hom}{Hom}
\DeclareMathOperator{\Ker}{Ker}
\begin{document}

\title{A note on Vasiu-Zink windows}
\author{Eike Lau}
\date{\today}
\address{Fakult\"{a}t f\"{u}r Mathematik,
Universit\"{a}t Bielefeld, D-33501 Bielefeld, Germany}
\email{lau@math.uni-bielefeld.de}

\begin{abstract}
We propose a notion of frames and windows that allows
an alternative proof of the Vasiu-Zink classification 
of $p$-divisible groups over ramified complete regular
local rings by their Breuil windows.
\end{abstract}

\maketitle

\section*{Introduction}

Let $k$ be a perfect field of characteristic $p\geq 3$.
In \cite{Vasiu-Zink}, Vasiu and Zink 
consider the regular local rings
$\FS=W(k)[[t_1,\ldots,t_r,u]]$ and $R=\FS/E\FS$, where
$$
E=u^e+a_{e-1}u^{e-1}+\cdots+a_0
$$
such that all $a_i\in W(k)[[t_1,\ldots,t_r]]$ are
divisible by $p$ and $a_0/p$ is a unit.
For $a\in\NN$ they consider also 
$\FS_a=\FS/u^{ae}\FS$ and $R_a=R/p^aR$
and write $\FS_\infty=\FS$ and $R_\infty=R$.
They define a category of Breuil windows
relative to $\FS_a\to R_a$ and a compatible
system of functors
$$
\kappa_a:(\text{Breuil windows rel.\ $\FS_a\to R_a$})
\to(\text{Dieudonn\'e displays over $R_a$}).
$$ 
Here Dieudonn\'e displays are equivalent to 
$p$-divisible groups by \cite{Zink-DDisp}.

\begin{Thm*}
The functor $\kappa_a$ is an equivalence
for all $a\in\NN\cup\{\infty\}$.
\end{Thm*}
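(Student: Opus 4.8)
The plan is to realize both sides of $\kappa_a$ as categories of windows over suitable frames and to deduce the asserted equivalence from a general criterion describing when a morphism of frames induces an equivalence of window categories. On the source side, Breuil windows relative to $\FS_a\to R_a$ are windows over a frame $\BBB_a$ whose underlying ring is $\FS_a$, whose ideal is $\Fa=\Ker(\FS_a\to R_a)$, and whose Frobenius is induced by the lift $\sigma$ of the absolute Frobenius with $\sigma(u)=u^p$ and $\sigma(t_i)=t_i^p$; note that $\sigma(u^{ae})\in u^{ae}\FS$, so $\sigma$ descends to $\FS_a$. On the target side, Dieudonn\'e displays over $R_a$ are windows over the Dieudonn\'e frame $\mathcal D_a$ built from the Zink ring $\WW(R_a)$, its ideal $\II(R_a)$, the Witt--Frobenius $F$ and its divided version on $\II(R_a)$. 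The functor $\kappa_a$ is induced by a morphism of frames $\iota_a\colon\BBB_a\to\mathcal D_a$ lying over the identity on $R_a$, coming from the canonical $\sigma$-equivariant ring homomorphism $\FS_a\to\WW(R_a)$ attached to the Frobenius lift $\sigma$; here one uses the relation $u^{e}=p\cdot(\text{unit})$ in $R$, which for the truncations reads $u^{ae}\in p^aR$ and makes $\FS_a\to R_a$ well defined in the first place.

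The abstract input I would isolate is the following \emph{crystalline frame criterion}: if $\alpha\colon\FFF\to\FFF'$ is a morphism of frames inducing an isomorphism on the base rings and such that the map of top rings is surjective with kernel $\Fb$ carrying a filtration by $\sigma$-stable ideals, topologically nilpotent for $\sigma$ and with projective graded quotients over the base, then base change is an equivalence of window categories. The proof of such a criterion is by lifting a window step by step along the filtration, the relevant obstruction and automorphism groups being computed from the graded quotients; this is exactly the kind of statement the frame formalism is designed to make clean.

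The main obstacle is that $\iota_a$ is visibly \emph{not} surjective: the Zink ring $\WW(R_a)$ is far larger than the image of $\FS_a$, so the criterion does not apply to $\iota_a$ directly. To get around this I would insert an intermediate frame based on the $p$-adically completed divided-power envelope of $\FS_a\to R_a$ (a relative Breuil ring $S_a$) and compare both $\BBB_a$ and $\mathcal D_a$ to it, arranging that each comparison map is either a crystalline surjection, to which the criterion above applies, or an isomorphism of window categories. The fact that modulo $p$ one has $E\equiv u^e$, whence $u^{ae}\equiv E^a$, is what ties the $u$-adic truncation defining $\FS_a$ to the $p$-adic truncation defining $R_a$ and keeps the kernels under control at finite level; the hypothesis $p\geq3$ enters through the divided powers and the topological nilpotence of $\sigma$ on them. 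Verifying that the kernel appearing in the factorization really carries a $\sigma$-nilpotent filtration with projective graded pieces is the genuinely delicate point, and the step I expect to absorb most of the work.

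Finally I would obtain the case $a=\infty$ by passing to the limit. Since $\FS$ is $u$-adically complete we have $\FS=\varprojlim_a\FS_a$, and $R=\varprojlim_a R_a$ because the identity $p^aR=u^{ae}R$ makes the $p$-adic and $u$-adic topologies of $R$ cofinal; consequently both Breuil windows over $\FS\to R$ and Dieudonn\'e displays over $R$ should be equivalent to compatible projective systems of the corresponding objects over the finite levels, the transition maps being reduction. Granting this continuity of the two window categories, the equivalence at $a=\infty$ follows formally from the equivalences at all finite $a$ together with the stated compatibility of the functors $\kappa_a$.
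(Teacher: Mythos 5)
Your general framework (realizing both sides as window categories over frames, and a crystalline criterion for surjections of frames with a Frobenius-nilpotent filtered kernel) coincides with Sections 1--3 of the paper, and your reduction of $a=\infty$ to finite $a$ by a projective limit is also the paper's. But at the decisive step you diverge, and there the proposal has a genuine gap. You correctly observe that $\varkappa_a:\BBB_a\to\FFF_{R_a}$ is not surjective on top rings, and you propose to repair this by inserting the $p$-adically completed divided power envelope of $\FS_a\to R_a$ and ``arranging that each comparison map is either a crystalline surjection \ldots or an isomorphism of window categories.'' Neither alternative holds for either leg: the map from $\FS_a$ into its PD envelope is an injection into a much larger ring, not a surjection, and the comparison between the PD-envelope frame and $\WW(R_a)$ is likewise not a surjection in either direction, so your criterion applies to neither. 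Making that comparison work is essentially the content of \cite{Zink-Windows} and \cite{Vasiu-Zink} and requires the crystal of a display and genuine control of divided powers; you flag it as ``the genuinely delicate point'' but give no argument, and as stated the step fails. (A smaller issue: your criterion only mentions $\sigma$-stability of the filtration, whereas what is actually needed, and what forces the intermediate frames below, is that $f_1$ be defined on the kernel and elementwise nilpotent on the graded pieces.)

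The paper's route is designed precisely to avoid this. Instead of comparing $\BBB_a$ and $\FFF_{R_a}$ through a PD envelope, it inducts on $a$: the projections $\BBB_{a+1}\to\BBB_a$ and $\FFF_{R_{a+1}}\to\FFF_{R_a}$ are each factored through an intermediate frame with the same top ring but enlarged ideal $I+\Fa$, equipped with an extension $f_1''$ of $f_1$ to the kernel ($f_1'(\Fa)=0$ on the Breuil side, the logarithmic-coordinate shift on the Witt side). Theorem \ref{ThCrys} makes the second factor crystalline, and Lemma \ref{LeHodge} identifies the first factor with the choice of a lift of the Hodge filtration. Hence deformations from level $a$ to level $a+1$ are classified identically on both sides, and $\varkappa_a$ crystalline implies $\varkappa_{a+1}$ crystalline; the base case $a=1$ comes from \cite{Zink-Windows} or a parallel direct argument. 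Divided powers enter only on the Witt-vector side, where they are already understood. To repair your proposal you would either have to supply the full PD-envelope comparison (essentially reproving \cite{Vasiu-Zink}) or switch to this deformation-theoretic induction.
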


For $a=1$ this is proved in \cite{Zink-Windows},
while $a=\infty$ is the main result of \cite{Vasiu-Zink}.
In this note we show that deformations from $a$ to $a+1$
of Breuil windows and of Dieudonn\'e displays are equivalent
because both are classified by lifts of the Hodge filtration.
With appropriate definitions, the known proof 
for Dieudonn\'e displays (recalled below) covers both cases. 
Technically, the main point is to separate the formalism
of $f_1$ from divided power constructions.

By induction it follows that $\kappa_a$ is an equivalence for
finite $a$; the case $a=\infty$ follows by passing to
the projective limit. Since the initial
case $a=1$ can be shown similarly,
this proof is essentially self-contained.

The author thanks Th.~Zink for pointing out 
an error in an earlier version.


\section{Frames and windows}

In this section $p=2$ is allowed.
The following notions of frames and windows do 
not coincide with the definitions in \cite{Zink-Windows}.

\begin{Def}
\label{DeFrame}
A frame is a quintuple
$\FFF=(S,I,R,f,f_1)$ where $S$ is a ring,
$R=S/I$ a quotient ring, $f$ an endomorphism of $S$,
and $f_1:I\to S$ an $f$-linear homomorphism
of $S$-modules.
We require that $S$ is local and that $f_1(I)$
generates $S$ as an $S$-module.
(In all examples actually $1\in f_1(I)$.)
\end{Def}

If $\FFF$ is a frame,
for an $S$-module $M$ we write $M^{(1)}=S\otimes_{f,S}M$,
and for an $f$-linear homomorphism of $S$-modules $g:M\to N$ 
we denote $g^\sharp:M^{(1)}\to N$ its linearisation,
$g^\sharp(s\otimes m)=sg(m)$.
There is a unique element
$\pi\in S$ such that $f(a)=\pi f_1(a)$
for $a\in I$.
Namely, if $f_1^\sharp(t)=1$ then $\pi=f^\sharp(t)$.

\begin{Def}
\label{DefWin}
A window over a frame $\FFF$ is a quadruple 
$\PPP=(P,Q,F,F_1)$ where $P$ is a finite free
$S$-module, $Q\subseteq P$ is a submodule, 
$F:P\to P$ and $F_1:Q\to P$ are $f$-linear
homomorphisms of $S$-modules, such that
\begin{enumerate}
\item $IP\subseteq Q$ and $P/Q$ is free over $R$, 
\item $F_1(ax)=f_1(a)F(x)$ for $a\in I$ and $x\in P$, 
\item $F_1(Q)$ generates $P$ as an $S$-module.
\end{enumerate}
\end{Def}

\begin{Remark}
Here $F_1$ determines $F$. Indeed, if $f_1^\sharp(t)=1$,
for $x\in P$ we have $F(x)=F_1^\sharp(tx)$.
In particular $F(x)=\pi F_1(x)$ when $x$ lies in $Q$.
\end{Remark}

\begin{Remark}
If $(P,Q,F,F_1)$ is a window, 
there is a decomposition of $S$-modules $P=L\oplus T$ 
with $Q=L\oplus IT$, called normal decomposition, and
$$
\Psi:L\oplus T\xrightarrow{F_1+F}P
$$
is an $f$-linear isomorphism (which means that
$\Psi^\sharp$ is an isomorphism). 
Conversely, for given
finite free $S$-modules $L$ and $T$,
the set of window structures on 
$(P=L\oplus T,Q=L\oplus IT)$ is bijective to the
set of $f$-linear isomorphisms $\Psi$ as above.
\end{Remark}

\section{Functoriality}

Assume that $\FFF$ and $\FFF'$ are frames and
$u\in S'$ is a unit.

\begin{Def}
A $u$-morphism of frames $\alpha:\FFF\to\FFF'$ is 
a ring homomorphism $\alpha:S\to S'$ with
$\alpha(I)\subseteq I'$ such that 
$f'\alpha=\alpha f$ and $f_1'\alpha=u\alpha f_1$.
A morphism of frames is a $1$-morphism of frames.
\end{Def}

Let $\alpha:\FFF\to\FFF'$ be a $u$-morphism of frames.

\begin{Def}
If $\PPP$ and $\PPP'$ are windows 
over $\FFF$ and $\FFF'$, respectively,
an $\alpha$-morphism $g:\PPP\to\PPP'$ is a
homomorphism $g:P\to P'$ of $S$-modules with 
$g(Q)\subseteq Q'$ such that $F'g=gF$
and $F_1'g=ugF_1$.
\end{Def}

For every window $\PPP$ over $\FFF$ there is a base
change $\alpha_*\PPP$ over $\FFF'$
with an $\alpha$-morphism $\PPP\to\alpha_*\PPP$ such that 
$\Hom_\alpha(\PPP,\PPP')=\Hom_{\FFF'}(\alpha_*\PPP,\PPP')$.
Clearly this requirement determines the window $\alpha_*\PPP$ 
uniquely. It can be constructed explicitly as follows: 
If a normal decomposition $(L,T,\Psi)$ for $\PPP$ is
chosen, a normal decomposition for $\alpha_*\PPP$ is
$(S'\otimes_SL,S'\otimes_ST,\Psi')$
with $\Psi'(s'\otimes l)=uf'(s')\otimes\Psi(l)$
and $\Psi'(s'\otimes t)=f'(s')\otimes\Psi(t)$.


\section{Crystalline morphisms}

\begin{Def}
\label{DefCrys}
A morphism of frames $\alpha:\FFF\to\FFF'$ is
called crystalline if 
$
\alpha_*:(\text{windows over $\FFF$})
\to(\text{windows over $\FFF'$})
$
is an equivalence of categories.
\end{Def}


\begin{Thm}
\label{ThCrys}
Suppose a morphism of frames $\alpha:\FFF\to\FFF'$
induces an isomorphism $R\cong R'$ 
and a surjection $S\to S'$ with nilpotent
kernel $\Fa\subset S$ which has a filtration 
$\Fa=\Fa_0\supseteq\ldots\supseteq\Fa_n=0$
such that $f(\Fa_i)\subseteq\Fa_{i+1}$ and 
$f_1(\Fa_i)\subseteq\Fa_i$ and $f_1$ is 
elementwise nilpotent on $\Fa_i/\Fa_{i+1}$.
Then $\alpha$ is crystalline.
\end{Thm}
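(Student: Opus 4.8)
The plan is to reduce to an elementary situation by d\'evissage and then to recast morphisms of windows as fixed points of an explicit additive operator that is locally nilpotent. First I would use the filtration to factor $\alpha$ as a finite composition of frame morphisms $\FFF\to\FFF_{n-1}\to\cdots\to\FFF_0=\FFF'$, where $\FFF_i$ is the frame with underlying ring $S/\Fa_i$ and the endomorphism and $f_1$ induced from $\FFF$; these descend precisely because $f(\Fa_i)\subseteq\Fa_{i+1}\subseteq\Fa_i$ and $f_1(\Fa_i)\subseteq\Fa_i$. Since a composition of equivalences is an equivalence, it suffices to treat a single step, so I may assume the kernel $\Fa$ satisfies $f(\Fa)=0$ and $f_1(\Fa)\subseteq\Fa$ with $f_1$ elementwise nilpotent on $\Fa$; moreover $\Fa\subseteq I$ because $R\cong R'$, and $\Fa\subseteq\Fm$ because $\Fa$ is nilpotent and $S$ is local.

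Essential surjectivity is then immediate from the normal form description. Given a window $\PPP'$ over $\FFF'$ represented by $(L',T',\Psi')$, I would lift $L'$ and $T'$ to finite free $S$-modules $L,T$ of the same rank and lift the matrix of the $f'$-linear map $\Psi'$ arbitrarily to an $f$-linear map $\Psi\colon L\oplus T\to L\oplus T$. Since $\Psi'^\sharp$ is an isomorphism and $\Fa\subseteq\Fm$, Nakayama shows $\Psi^\sharp$ is an isomorphism, so $(L,T,\Psi)$ is a window, and because $S'\otimes_S L=L'$, $S'\otimes_S T=T'$ and $\Psi'$ is the reduction of $\Psi$, its base change is isomorphic to $\PPP'$.

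For full faithfulness I would fix windows $\PPP_1,\PPP_2$ over $\FFF$ with base changes $\PPP_1',\PPP_2'$, and translate the morphism conditions into a fixed point equation. Choosing normal decompositions and using that $f_1(I)$ generates $S$, an $S$-linear $g\colon P_1\to P_2$ with $g(Q_1)\subseteq Q_2$ is a morphism exactly when $u(g)=g\Psi_1$, where $u(g)$ is the $f$-linear map $l+t\mapsto F_{1,2}(g(l))+F_2(g(t))$ (this makes sense as $g(L_1)\subseteq Q_2$); equivalently $\Phi(g):=u(g)^\sharp\circ(\Psi_1^\sharp)^{-1}=g$. The operator $\Phi$ is additive and commutes with reduction modulo $\Fa$, and the group $K=\Hom_S(P_1,\Fa P_2)$, which is the kernel of reduction and lies inside the morphisms with $g(Q_1)\subseteq Q_2$ since $\Fa P_2\subseteq Q_2$, is stable under $\Phi$: indeed $F_2$ annihilates $\Fa P_2$ because $f(\Fa)=0$, while $F_{1,2}$ sends $\Fa P_2$ into $\Fa P_2$ because $f_1(\Fa)\subseteq\Fa$. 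Lifting a given morphism $\bar g$ to any $g_0$ with $g_0(Q_1)\subseteq Q_2$, one checks $\Phi(g_0)-g_0\in K$, and the sought lift is $g_0+h$ with $h\in K$ solving $(1-\Phi)(h)=\Phi(g_0)-g_0$. Thus everything reduces to showing $1-\Phi$ is bijective on $K$.

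The hard part, and the only place where elementwise nilpotence of $f_1$ is used, is exactly this bijectivity, which I would deduce from local nilpotence of $\Phi$ on $K$. The key computation is that on $K$ the operator $\Phi$ acts on the $\Fa$-coefficients of $h$ essentially by applying $f_1$: from $F_{1,2}(ax)=f_1(a)F_2(x)$ together with the $f$-linearity relation $f_1(sb)=f(s)f_1(b)$, one sees that the coefficients of $\Phi^m(h)$ lie in the $S$-span of $f_1^m$ applied to the finitely many coefficients of $h$. Since $f_1$ is elementwise nilpotent on $\Fa$, a sufficiently large power of $\Phi$ annihilates any given $h$, so $\sum_{m\geq 0}\Phi^m$ is a well-defined two-sided inverse of $1-\Phi$ on $K$. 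This produces a unique lift of $\bar g$, giving full faithfulness and hence that $\alpha$ is crystalline.
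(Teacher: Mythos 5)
Your proposal is correct and follows essentially the same strategy as the paper: d\'evissage along the filtration to the case $f(\Fa)=0$, essential surjectivity by lifting normal decompositions $(L,T,\Psi)$, and full faithfulness via a fixed-point equation for an operator that is elementwise nilpotent because $f_1$ is elementwise nilpotent on $\Fa$. The only (harmless) organizational differences are that you solve for the morphism $g$ directly on $\Hom$-sets, whereas the paper first reduces to automorphisms and to the case $\Fa^2=0$ and then solves the analogous equation for the perturbation $\omega=g-1$.
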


The proof is a variation of \cite{Zink-Disp}, Theorem 44
and \cite{Zink-DDisp}, Theorem 3.

\begin{proof}
Since $\alpha$ factors as 
$\FFF\to\FFF''\to\FFF'$ with $S''=S/\Fa_1$,
by induction we may assume that $f(\Fa)=0$ 
and that $f_1$ is elementwise nilpotent on $\Fa$.
We may also assume that $\Fa^2=0$ because 
the powers of $\Fa$ are stable under $f_1$.

The functor $\alpha_*$ is essentially surjective 
since the constituents of a normal
decomposition and the $f$-linear isomorphism $\Psi$ 
can be lifted from $\FFF'$ to $\FFF$.
In order that $\alpha_*$ is fully faithful
it suffices that $\alpha_*$ is fully faithful on 
automorphisms because a homomorphism $g:\PPP\to\PPP'$ 
can be encoded by the automorphism 
$\left(\begin{smallmatrix}1&0\\g&1\end{smallmatrix}\right)$
of $\PPP\oplus\PPP'$.
Since for a window $\PPP$ over $\FFF$ 
an automorphism of $\alpha_*\PPP$
can be lifted to an $S$-module automorphism of $P$ 
it suffices to prove the following assertion.

{\em
Assume that $\PPP=(P,Q,F,F_1)$ and $\PPP'=(P,Q,F',F_1')$
are two windows over $\FFF$ such that $F\equiv F'$ and
$F_1\equiv F_1'$ modulo $\Fa$. Then there is a unique
isomorphism $g:\PPP\cong\PPP'$ with 
$g\equiv\id$ modulo $\Fa$.
}

We write $F_1'=F_1+\eta$ and $F'=F+\varepsilon$ and $g=1+\omega$, 
where $\eta:Q\to\Fa P$ and $\varepsilon:P\to\Fa P$ are given, 
and $\omega:P\to\Fa P$ is an arbitrary homomorphism of $S$-modules.
The induced $g$ is an isomorphism of windows if and only if
$gF_1=F_1'g$ on $Q$, which translates into
\begin{equation}
\label{EqCond1}
\eta=\omega F_1-F_1\omega-\eta\omega.
\end{equation}
Here $\eta\omega=0$ because for $a\in\Fa$ and $x\in P$
we have $\eta(ax)=f_1(a)\varepsilon(x)$,
which is zero as $\Fa^2=0$. We fix a normal
decomposition $P=L\oplus T$ with $Q=L\oplus IT$.
For $l\in L$, $t\in T$, and $a\in I$ we have
\begin{gather*}
\eta(l+at)=\eta(l)+f_1(a)\varepsilon(t), \\
\omega(F_1(l+at))=\omega(F_1(l))+f_1(a)\omega(F(t)), \\
F_1(\omega(l+at))=F_1(\omega(l))+f_1(a)F(\omega(t)).
\end{gather*}
Here $F\omega=0$ because for $a\in\Fa$ and $x\in P$
we have $F(ax)=f(a)F(x)$, and $f(\Fa)=0$.
Hence \eqref{EqCond1} is equivalent to:
\begin{equation}
\label{EqCond2}
\left\{
\begin{array}{l}
\varepsilon=\omega F\qquad\qquad\;\text{ on } T, \\
\eta=\omega F_1-F_1\omega\quad\text{ on } L.
\end{array}
\right.
\end{equation}

Since $\Psi:L\oplus T\xrightarrow{F_1+F} P$ is an $f$-linear
isomorphism, the datum of $\omega$ is equivalent to 
the pair of $f$-linear homomorphisms
$$
\omega_T=\omega F:T\to\Fa P,\quad
\omega_L=\omega F_1:L\to\Fa P.
$$
Let $\lambda:L\to L^{(1)}$ be the composition
$L\subseteq P\xrightarrow{(\Psi^\sharp)^{-1}}
L^{(1)}\oplus T^{(1)}\xrightarrow{pr_1}L^{(1)}$
and let $\tau:L\to T^{(1)}$ be analogous
with $pr_2$ in place of $pr_1$. 
Then \eqref{EqCond2} becomes:
\begin{equation}
\label{EqCond3}
\left\{
\begin{array}{l}
\omega_T=\varepsilon|_T, \\
\omega_L-F_1\omega_L^\sharp\lambda=\eta|_L+F_1\omega_T^\sharp\tau.
\end{array}
\right.
\end{equation}
Let $U(\omega_L)=F_1\omega_L^\sharp\lambda$.
The endomorphism $F_1$ of $\Fa P$ is elementwise
nilpotent because $F_1(ax)=f_1(a)F(x)$ and because
$f_1$ is elementwise nilpotent on $\Fa$ by assumption.
Since $L$ is finitely generated it follows that
$U$ is elementwise nilpotent, so $1-U$ is bijective, 
and \eqref{EqCond3} has a unique solution.
\end{proof}


\section{Abstract deformation theory}

The Hodge filtration of a window $\PPP$
is the submodule $Q/IP\subseteq P/IP$.

\begin{Lemma}
\label{LeHodge}
Let $\alpha:\FFF\to\FFF'$ be a morphism
of frames such that $S=S'$, i.e.\ $I\subseteq I'$
is a sub-ideal and $f_1'$ is an extension of $f_1$. 
Then the base change functor induces an equivalence
between the category of windows over $\FFF$ and 
the category of pairs consisting of a window 
$\PPP'$ over $\FFF'$ and a lift of its Hodge 
filtration to a direct summand $V\subseteq P'/IP'$.
\end{Lemma}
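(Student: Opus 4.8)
The plan is to make the asserted functor explicit and then to construct a quasi-inverse by hand, reducing the whole statement to one observation about normal decompositions. Since $S=S'$, $f=f'$ and $\alpha=\id_S$, the base change $\alpha_*\PPP$ of a window $\PPP=(P,Q,F,F_1)$ over $\FFF$ has the same underlying module $P$, the enlarged submodule $Q'=Q+I'P$, unchanged $F$, and $F_1$ extended to $Q'$ by the base-change formula of Section 2 (this is what that formula gives when $u=1$). The Hodge filtration of $\alpha_*\PPP$ is $Q'/I'P\subseteq P/I'P$, and the reduction $Q/IP\subseteq P/IP$ is a direct summand (because $P/Q$ is free over $R$) whose image in $P/I'P$ is precisely $Q'/I'P$. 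Thus $\PPP\mapsto(\alpha_*\PPP,\,Q/IP)$ is a well-defined functor to the category of pairs, and it is this functor that I must show is an equivalence.

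For the quasi-inverse, given a pair $(\PPP',V)$ with $\PPP'=(P',Q',F',F_1')$ I would set $P=P'$, let $Q\subseteq P$ be the preimage of $V$ under $P\to P/IP$, and put $F=F'$ and $F_1=F_1'|_Q$. This is legitimate: since $V$ maps into the Hodge filtration of $\PPP'$, the submodule $Q$ is contained in $Q'$, so $F_1'$ does restrict to $Q$. Window axiom (1) is clear because $V$ is a direct summand, and axiom (2) holds because $f_1'$ agrees with $f_1$ on $I\subseteq I'$ and $F=F'$. I also note that automatically $Q+I'P=Q'$, since both are the preimage of the Hodge filtration under $P\to P/I'P$.

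The only nontrivial point, and the step I expect to be the main obstacle, is axiom (3): that $F_1(Q)$ still generates $P$ after shrinking $Q'$ to the smaller $Q$. I would dispose of it by choosing a normal decomposition adapted to the lift. Lift a splitting $P/IP=V\oplus\bar T$ to a decomposition $P=L\oplus T$ into free $S$-modules (possible since $S$ is local and $P$ is free), so that $Q=L\oplus IT$; then also $Q'=Q+I'P=L\oplus I'T$, so $(L,T)$ is a normal decomposition both for the reconstructed datum over $\FFF$ and for $\PPP'$ over $\FFF'$. The associated $f$-linear map $\Psi=F_1|_L+F|_T$ is then literally the same in both cases, as $F=F'$ and $F_1=F_1'$ on $L\subseteq Q$. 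Since $\PPP'$ is a window, this $\Psi$ is an $f$-linear isomorphism, whence the reconstructed datum is a window over $\FFF$ by the normal-decomposition criterion of Section 1; this simultaneously exhibits $\alpha_*$ of the reconstructed window as $\PPP'$ together with the lift $V$.

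It then remains only to check that the two constructions are mutually inverse and functorial. The round trip starting from $\PPP$ returns $P$, the preimage of $Q/IP$ (which is $Q$), $F$, and $F_1'|_Q=F_1$, hence $\PPP$ itself; the reverse round trip was settled in the previous paragraph. Functoriality is then immediate, since both constructions leave $P$ and $F$ untouched, and a homomorphism $g$ respects $Q$ over $\FFF$ if and only if it respects $Q'$ over $\FFF'$ and carries the lift $V=Q/IP$ into its target; there is nothing further to compute.
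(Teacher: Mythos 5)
Your proof is correct and follows essentially the same route as the paper: you take the preimage $Q$ of $V$, and verify axiom (3) by choosing a normal decomposition $P=L\oplus T$ with $Q=L\oplus IT$, observing that it also serves $\PPP'$ and that the resulting $\Psi$ is the same $f$-linear isomorphism. The extra checks you supply (explicit description of $\alpha_*$ when $S=S'$, the round-trip identities, functoriality) are routine points the paper leaves implicit.
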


\begin{proof}
Let $\PPP'$ over $\FFF'$ together with $V\subseteq P'/IP'$ 
be given and let $Q\subset P'$ be the inverse image of $V$.
In order that $(P',Q,F',F_1'|_Q)$ is a window
we must show that $F_1'(Q)$ generates $P'$. If
$P'=L'\oplus T'$ such that $Q=L'\oplus IT'$, this is
equivalent to $F_1'+F':L'\oplus T'\to P'$ being an 
$f$-linear isomorphism, which holds because
$\PPP'$ is a window.
\end{proof}

Assume that a morphism of frames $\alpha:\FFF\to\FFF'$
is given such that $S\to S'$ is surjective with nilpotent
kernel $\Fa$ and $I'=IS'$.
We want to factor $\alpha$ into morphisms of frames
$$
(S,I,R,f,f_1)\xrightarrow{\alpha_1}
(S,I'',R',f,f_1'')\xrightarrow{\alpha_2}
(S',I',R',f',f_1')
$$ 
such that $\alpha_2$ satisfies the hypotheses of
Theorem \ref{ThCrys}. Necessarily $I''=I+\Fa$. 
The main point is to define $f_1'':I''\to S$, which is 
equivalent to define an $f$-linear homomorphism $f_1'':\Fa\to\Fa$ 
that extends the restriction of $f_1$ to $\Fa\cap I$
and satisfies the hypotheses of Theorem \ref{ThCrys}.

If this is achieved, Theorem \ref{ThCrys} and
Lemma \ref{LeHodge} show that windows over
$\FFF$ are equivalent to windows $\PPP'$ over $\FFF'$
together with a lift of the Hodge filtration
to a direct summand of $P/IP$, where
$\PPP''=(P,Q'',F,F_1'')$ is the unique lift 
of $\PPP'$ under $\alpha_2$.


\section{Dieudonn\'e frames}

Let $R$ be a noetherian complete local ring with 
maximal ideal $\Fm$ and perfect residue field 
$k$ of characteristic $p$. 
If $p=2$ we assume that $pR=0$. 
There is a unique subring $\WW(R)\subset W(R)$ 
stable under $f$ such that the projection $\WW(R)\to W(k)$
is surjective with kernel $\hat W(\Fm)$,
the ideal of all Witt vectors in $W(\Fm)$ 
whose coefficients converge to zero $\Fm$-adically. 
Let $\II_R$ be the kernel of the
natural surjection $\WW(R)\to R$.

The Dieudonn\'e frame associated to $R$ is
$$
\FFF_R=(\WW(R),\II_R,R,f,f_1)
$$
with $f_1=v^{-1}$; we note that $\WW(R)$ is a local ring. 
In this case $\pi=p$.
Windows over $\FFF_R$ are Dieudonn\'e displays
over $R$ in the sense of \cite{Zink-DDisp}.
A local homomorphism $R\to R'$ induces a morphism 
of frames $\FFF_R\to\FFF_{R'}$.

Suppose $R'=R/\Fb$ for a nilpotent ideal $\Fb$ 
equipped with elementwise nilpotent divided powers.
The projection $\WW(R)\to\WW(R')$ is surjective with 
kernel $\hat W(\Fb)=W(\Fb)\cap\hat W(\Fm)$.
The divided Witt polynomials define an isomorphism
of $\WW(R)$-modules
$$
\log:\hat W(\Fb)\cong\Fb^{<\infty>}
$$
where $\Fb^{<\infty>}$ is the group of
sequences $[b_0,b_1,\ldots]$ with $b_i\in\Fb$
which converge to zero $\Fm$-adically,
and $x\in\WW(S)$ acts by 
$[b_0,b_1,\ldots]\mapsto [w_0(x)b_0,w_1(x)b_1,\ldots]$.
In logarithmic coordinates, the restriction 
of $f_1$ to $\hat W(\Fb)\cap\II_R$ is given by
$$
f_1[0,b_1,b_2,\ldots]=[b_1,b_2,\ldots].
$$
Let $\tilde\II=\II_R+\hat W(\Fb)$.
Then $f_1$ extends uniquely to an $f$-linear homomorphism 
$$
\tilde f_1:\tilde\II\to\WW(R)
$$
with $\tilde f_1[b_0,b_1,\ldots]=[b_1,b_2,\ldots]$
on $\hat W(\Fb)$, and we obtain a factorisation
\begin{equation}
\label{EqDieuDef}
\FFF_R\xrightarrow{\alpha_1}
\FFF'=(\WW(R),\tilde\II,f,\tilde f_1)
\xrightarrow{\alpha_2}\FFF_{R'}.
\end{equation}
The following is a reformulation of \cite{Zink-DDisp}, Theorem 3.

\begin{Prop}
\label{PrCrysDieu}
Here $\alpha_2$ is crystalline.
\end{Prop}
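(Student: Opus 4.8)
The plan is to deduce the Proposition from Theorem \ref{ThCrys}, applied to $\alpha_2\colon\FFF'\to\FFF_{R'}$. First I would record the basic shape of $\alpha_2$: the underlying ring map is the projection $\WW(R)\to\WW(R')$, which is surjective with kernel $\Fa=\hat W(\Fb)$, and it induces the identity on the common quotient $R'$; the Frobenius $f$ and the homomorphisms $\tilde f_1$ and $f_1=v^{-1}$ are compatible by the construction of the factorisation \eqref{EqDieuDef}. To see that $\Fa$ is a nilpotent ideal I would use that Witt multiplication raises the order of the coefficients, so $\hat W(\Fb)^j\subseteq\hat W(\Fb^j)$; as $\Fb$ is nilpotent this gives $\hat W(\Fb)^N=0$. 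The key computational input is the logarithmic description $\log\colon\hat W(\Fb)\cong\Fb^{<\infty>}$: under it $\tilde f_1$ is the shift $[b_0,b_1,\ldots]\mapsto[b_1,b_2,\ldots]$, while $f=p\tilde f_1$ acts as $[b_0,b_1,\ldots]\mapsto[pb_1,pb_2,\ldots]$, which is just the relation $f=\pi\tilde f_1$ with $\pi=p$.

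Next I would produce the filtration required by Theorem \ref{ThCrys}. The natural candidate is $\Fa_i=\hat W(\Fm^i\Fb)$, which in logarithmic coordinates is the subgroup of sequences with all entries in $\Fm^i\Fb$. Stability under $\tilde f_1$ is then clear, since the shift does not change the coefficients. For the Frobenius condition $f(\Fa_i)\subseteq\Fa_{i+1}$ I would use $p\in\Fm$: the shift keeps the entries in $\Fm^i\Fb$ and multiplication by $p$ moves them into $p\Fm^i\Fb\subseteq\Fm^{i+1}\Fb$. Finally, on the graded piece $\Fa_i/\Fa_{i+1}\cong(\Fm^i\Fb/\Fm^{i+1}\Fb)^{<\infty>}$ the module $\Fm^i\Fb/\Fm^{i+1}\Fb$ is killed by $\Fm$, hence is a $k$-vector space on which the $\Fm$-adic topology is discrete; a sequence converging to zero therefore has finite support, so the shift $\tilde f_1$ is elementwise nilpotent. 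This is the point at which the hypothesis that $\Fb$ carries divided powers enters, through the very existence of the logarithm.

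The step I expect to be the main obstacle is that this filtration is infinite: by the Krull intersection theorem one only has $\bigcap_i\Fa_i=\hat W(\bigcap_i\Fm^i\Fb)=0$, rather than $\Fa_n=0$ for a finite $n$ as Theorem \ref{ThCrys} literally demands. I would resolve this by completeness. Inspecting the proof of Theorem \ref{ThCrys}, nilpotence of the kernel is used in exactly one place, to invert the operator $1-U$ with $U=F_1\omega_L^\sharp\lambda$, and the finiteness of the filtration serves only to make $U$ elementwise nilpotent; essential surjectivity, which lifts a normal decomposition and the $f$-linear isomorphism $\Psi$ along the surjection $\WW(R)\to\WW(R')$, uses merely that the kernel is nilpotent. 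Here the filtration $\hat W(\Fm^i\Fb)$ instead makes $\tilde f_1$, and therefore $U$, topologically nilpotent, while the relevant modules over $\WW(R)$ are $\Fm$-adically complete; so the unique $\omega$ is obtained by $\Fm$-adic successive approximation, the series $\sum_k U^k$ converges, and $1-U$ remains bijective. If one prefers to invoke Theorem \ref{ThCrys} as a black box, one can apply it to the finite-length divided power thickenings $R/\Fm^n\Fb\to R'$, where the filtration terminates, and pass to the projective limit; but that limit rests on the same completeness, so I would present the direct argument.
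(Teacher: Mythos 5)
Your proposal is correct and follows the same overall strategy as the paper (reduce to Theorem \ref{ThCrys}, identify that the only obstruction is that $\tilde f_1$ is merely topologically nilpotent on $\hat W(\Fb)$, and resolve this by a limit argument), but the two key choices are genuinely different. The paper handles the case of nilpotent $\Fm$ first, using the filtration $\Fa_i=p^i\hat W(\Fb)$ (where convergence to zero already forces finite support, so elementwise nilpotence of the shift is automatic), and then reduces the general case to this one via Artinian quotients $R/I_n$ with $I_n$ defining the $\Fm$-adic topology and $\Fb\cap I_n$ a sub-pd-ideal; it does not exhibit these $I_n$ explicitly. You instead use the filtration $\Fa_i=\hat W(\Fm^i\Fb)$ directly over $R$, which satisfies all the hypotheses of Theorem \ref{ThCrys} except termination (your verification on graded pieces implicitly uses Artin--Rees to see that convergence to zero in $R$ gives finite support modulo $\Fm^{i+1}\Fb$, which is fine), and you make the approximating quotients explicit as $R/\Fm^n\Fb$ --- and indeed $\Fm^n\Fb$ is stable under the divided powers of $\Fb$ by the addition formula for $\gamma_k$, so this is a legitimate and arguably cleaner instantiation of the paper's unspecified $I_n$. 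One caveat on your preferred ``direct'' variant: it is not quite accurate that nilpotence enters the proof of Theorem \ref{ThCrys} only in inverting $1-U$; the finiteness of the filtration is also what permits the initial reduction to $f(\Fa)=0$ and $\Fa^2=0$, which kills the terms $F\omega$ and $\eta\omega$ and makes the equation for $\omega$ linear. Your successive approximation has to absorb that reduction as well (solving modulo $\Fa_{i+1}$ given a solution modulo $\Fa_i$ and using completeness of $\hat W(\Fb)$ for the filtration), which works but is exactly the projective-limit argument in disguise; since you also offer the black-box version over $R/\Fm^n\Fb$, nothing essential is missing.
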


It follows that deformations of Dieudonn\'e displays
from $R'$ to $R$ are classified by lifts of the Hodge
filtration; this is \cite{Zink-DDisp}, Theorem 4.

\begin{proof}
When $\Fm$ is nilpotent, $\alpha_2$ satisfies the
hypotheses of Theorem \ref{ThCrys};
the required filtration of $\Fa=\hat W(\Fb)$ 
is $\Fa_i=p^i\Fa$. In general, the hypotheses
of Theorem \ref{ThCrys} are not satisfied 
because $f_1:\Fa\to\Fa$ is only topologically nilpotent.
However, one can find a sequence of ideals
$R\supset I_1\supset I_2\ldots$ which define the
$\Fm$-adic topology such that each $\Fb\cap I_n$ is
is stable under the divided powers of $\Fb$. 
Then the proposition holds for each $R/I_n$ in place of $R$,
and the general case follows by passing to the projective limit.
\end{proof}


\section{The Breuil frames}

We return to the notation fixed in the introduction,
in particular $p\geq 3$.
Let $I_a=E\FS_a$ be the kernel of $\FS_a\to R_a$;
note that $E$ is not a zero divisor in $\FS_a$.
The Frobenius $f$ of $W(k)$ is extended to $\FS_a$
by $f(u)=u^p$ and $f(t_i)=t_i^p$. For $x\in I_a$
let $f_1(x)=f(x/E)$. Then
$$
\BBB_a=(\FS_a,I_a,R_a,f,f_1)
$$
is a frame with $\pi=f(E)$.
Windows over $\BBB_a$ 
in the sense of Definition \ref{DefWin} 
are (equivalent to) the Breuil windows
relative to $\FS_a\to R_a$ introduced in \cite{Vasiu-Zink};
see loc.cit., Lemma 1. 

The frames $\BBB_a$ are related with Dieudonn\'e frames
as follows. There is a unique ring homomorphism 
$\varkappa:\FS\to\WW(R)$ that lifts the projection
$\FS\to R$ and commutes with $f$.
By \cite{Vasiu-Zink}, Lemma 2, we have
$\varkappa(f(E))=pu$ for a unit $u\in R$.
It is easy to see that $\varkappa$ induces 
compatible $u$-morphisms of frames 
$\varkappa_a:\BBB_a\to\FFF_{R_a}$
lying over the identity of $R_a$.

\begin{Thm}
\label{ThMain}
For all $a\in\NN\cup\{\infty\}$ the morphism
$\varkappa_a$ is crystalline.
\end{Thm}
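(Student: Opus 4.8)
The plan is to induct on $a$, taking the base case $a=1$ from \cite{Zink-Windows}, to reduce the passage from $a$ to $a+1$ to a comparison of two deformation theories that are both governed by lifts of the Hodge filtration over $R_{a+1}$, and finally to treat $a=\infty$ by passing to the projective limit. Throughout one works with the commuting square of frames
\[
\begin{CD}
\BBB_{a+1} @>{\varkappa_{a+1}}>> \FFF_{R_{a+1}}\\
@VVV @VVV\\
\BBB_a @>{\varkappa_a}>> \FFF_{R_a}
\end{CD}
\]
whose vertical arrows are the morphisms of frames (with $u=1$) induced by the reductions $\FS_{a+1}\to\FS_a$ and $R_{a+1}\to R_a$. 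The inductive hypothesis is that $\varkappa_a$ is crystalline, and the goal is to show that the two vertical arrows carry the same deformation data and are matched by $\varkappa$.

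For the left-hand vertical I apply the deformation theory of Section 4 to $\BBB_{a+1}\to\BBB_a$, whose kernel is $\Fa=u^{ae}\FS_{a+1}$. One has $\Fa^2=0$ and, crucially, $f(\Fa)=0$, since $f(u^{ae})=u^{pae}$ vanishes in $\FS_{a+1}$ for $a\ge 1$. The frame relation $f(x)=f(E)\,f_1(x)$ then shows that $f_1$ already carries $\Fa\cap I_{a+1}$ into the annihilator of $f(E)$; what must be produced is an $f$-linear extension $f_1''\colon\Fa\to\Fa$ landing in that annihilator, for then $\FFF''=(\FS_{a+1},I'',R_a,f,f_1'')$ with $I''=I_{a+1}+\Fa$ is a frame and $\alpha_2\colon\FFF''\to\BBB_a$ falls under Theorem \ref{ThCrys}. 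Because $f(\Fa)=0$, one may take the filtration in Theorem \ref{ThCrys} to be trivial, so that only the elementwise nilpotence of $f_1''$ on $\Fa$ remains to be checked; this is where the shape $E=u^e+\cdots$ enters and where the divided power apparatus of the Dieudonn\'e side is replaced by a direct estimate on $u$-adic orders. Granting this, Lemma \ref{LeHodge} identifies windows over $\BBB_{a+1}$ with pairs $(\PPP_a,V)$, where $\PPP_a$ is a window over $\BBB_a$ and $V\subseteq P/I_{a+1}P$ is a lift of its Hodge filtration to a direct summand of this $R_{a+1}$-module.

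For the right-hand vertical the parallel statement is exactly Proposition \ref{PrCrysDieu}: the kernel $p^aR/p^{a+1}R$ of $R_{a+1}\to R_a$ carries elementwise nilpotent divided powers because $p\ge 3$, so windows over $\FFF_{R_{a+1}}$ are identified with pairs $(\PPP_a',V')$ consisting of a Dieudonn\'e display over $R_a$ and a lift of its Hodge filtration to a direct summand of $P'/\II_{R_{a+1}}P'$. Now $\varkappa_{a+1}$ lies over the identity of $R_{a+1}$, so reducing the base change $(\varkappa_{a+1})_*\PPP$ modulo $\II_{R_{a+1}}$ returns the $R_{a+1}$-module $P/I_{a+1}P$ and sends the Hodge filtration of $\PPP$ to that of $(\varkappa_{a+1})_*\PPP$. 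Since $\varkappa$ commutes with the two vertical reductions, the square of categories formed by the two Lemma \ref{LeHodge}-identifications and the two base-change functors commutes, and the datum $V$ corresponds to the datum $V'$ under a fixed isomorphism of $R_{a+1}$-modules. Combined with the inductive equivalence $\varkappa_a$, this exhibits the right-hand comparison functor $(\PPP_a,V)\mapsto((\varkappa_a)_*\PPP_a,V')$ as an equivalence, whence $(\varkappa_{a+1})_*$ is an equivalence. The case $a=\infty$ then follows by writing $\FS=\varprojlim_a\FS_a$ and $R=\varprojlim_a R_a$, observing that windows over $\BBB_\infty$ and over $\FFF_R$ are the projective limits of the finite-level windows, and setting $\varkappa_\infty=\varprojlim_a\varkappa_a$.

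I expect the main obstacle to lie in the construction of $f_1''$ and the verification of its elementwise nilpotence: one must exhibit an $f$-linear extension of $f_1|_{\Fa\cap I_{a+1}}$ into the annihilator of $f(E)$ inside $\Fa$ and show that iterating it annihilates every element, using only the form of $E$ and the effect of $f$ on $u$-adic orders. This is the precise sense in which the $f_1$-formalism is separated from the divided power constructions of the Dieudonn\'e case. A secondary but essential point is to confirm that $\varkappa$ intertwines the two factorizations—equivalently, that the isomorphism $P/I_{a+1}P\cong P'/\II_{R_{a+1}}P'$ is compatible with reduction to level $a$—so that the Hodge-filtration data on the two sides are genuinely identified and not merely abstractly isomorphic; it is this compatibility, rather than either deformation theory in isolation, that makes the induction go through.
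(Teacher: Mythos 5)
Your architecture is exactly the paper's: the square of frames, the factorisation of each vertical reduction through an intermediate frame so that the second step is crystalline (Theorem \ref{ThCrys} on the Breuil side, Proposition \ref{PrCrysDieu} on the Dieudonn\'e side) and the first step is governed by Lemma \ref{LeHodge}, then induction from $a=1$ and a projective limit for $a=\infty$. However, you leave open precisely the two points on which the argument actually turns, and a referee would not accept ``granting this'' for either. First, the extension $f_1''$ on $\Fa=u^{ae}\FS_{a+1}$: you anticipate a delicate construction involving the shape of $E$ and a nilpotence estimate, but the resolution is much simpler than you expect. One takes $f_1''=0$ on $\Fa$. This is well defined because on $\Fa\cap I_{a+1}=u^{ae}E\FS_{a+1}$ one computes directly $f_1(u^{ae}Ey)=f(u^{ae}y)=u^{pae}f(y)=0$ in $\FS_{a+1}$ (your $u$-adic order estimate, but applied to show $f_1$ already vanishes there, not merely that it lands in the annihilator of $f(E)$); elementwise nilpotence is then vacuous and the filtration in Theorem \ref{ThCrys} can be taken trivial. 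You should state this rather than defer it.

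Second, and more seriously, the assertion that ``the square of categories formed by the two Lemma \ref{LeHodge}-identifications and the two base-change functors commutes'' is not a formal consequence of $\varkappa$ commuting with the reductions: it requires that $\varkappa_{a+1}$ actually defines a $u$-morphism from the intermediate Breuil frame $\BBB'=(\FS_{a+1},I_{a+1}+\Fa,R_a,f,f_1')$ to the intermediate Dieudonn\'e frame $\FFF'=(\WW(R_{a+1}),\tilde\II,f,\tilde f_1)$, i.e.\ that $\tilde f_1\bigl(\varkappa_{a+1}(u^{ae}x)\bigr)=0$ for all $x$, matching $f_1'(\Fa)=0$. This is a concrete computation, not a naturality statement: $\varkappa_{a+1}(u^{ae})$ is the Teichm\"uller element $[u^{ae}]$, whose logarithmic coordinates are $[u^{ae},0,0,\ldots]$, so $\tilde f_1([u^{ae}])=[0,0,\ldots]=0$, and then $\tilde f_1(\varkappa_{a+1}(u^{ae}x))=\tilde f_1([u^{ae}])\,f(\varkappa_{a+1}(x))=0$. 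Without this step the two Hodge-filtration classifications are, as you yourself put it, only abstractly parallel rather than identified, and the induction does not close. With these two verifications supplied, your proof coincides with the paper's.
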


The case $a=1$ follows from \cite{Zink-Windows},
and $a=\infty$ is the main result of \cite{Vasiu-Zink}.

\begin{proof}
For $a\in\NN$ we construct 
a commutative diagram of frames such that
vertical arrows are $u$-morphisms and horizontal
arrows are $1$-morphisms:
\begin{equation}
\label{EqDiagFrames}
\xymatrix{
\BBB_{a+1} \ar[r]^{\beta_1} \ar[d]^{\varkappa_{a+1}} &
\BBB' \ar[r]^{\beta_2} \ar[d]^{\varkappa'} &
\BBB_{a} \ar[d]^{\varkappa_a} \\
\FFF_{R_{a+1}} \ar[r]^{\alpha_1} &
\FFF' \ar[r]^{\alpha_2} &
\FFF_{R_a}
}
\end{equation}
The lower line is the factorisation \eqref{EqDieuDef} 
of the projection $\FFF_{R_{a+1}}\to\FFF_{R_a}$
with respect to the trivial divided powers 
on the kernel $\Fb=p^aR_{a+1}$.
In particular, $\alpha_2$ is crystalline by 
Proposition \ref{PrCrysDieu}.

Let $\Fa=u^{ae}\FS_{a+1}=\Ker(\FS_{a+1}\to\FS_a)$.
We define a frame
$$
\BBB'=(\FS_{a+1},I',R_a,f,f_1')
$$
where $I'=I_{a+1}+\Fa$, and where $f_1':I'\to\FS_{a+1}$ 
is the unique extension of $f_1$ with $f_1'(\Fa)=0$.
This is well-defined because $f_1=0$ on 
$\Fa\cap I_{a+1}=u^{ae}E\FS_{a+1}$.
Hence the upper line of \eqref{EqDiagFrames}
is constructed too. Here $\beta_2$ is crystalline
by Theorem \ref{ThCrys};
the required filtration of $\Fa$ is trivial.

In order that $\varkappa'$, necessarily given by
$\varkappa_{a+1}$, is a $u$-morphism of frames,
we need that $f_1(\varkappa_{a+1}(u^{ae}x))=0$
for $x\in\FS_{a+1}$. 
Now $\varkappa_{a+1}(u^{ae})$ is the Teich\-m\"uller
element $[u^{ae}]$ and 
$\log([u^{ae}])=[u^{ae},0,0,\ldots]$.
Hence $f_1([u^{ae}])=0$ and thus
$f_1(\varkappa_{a+1}(u^{ae}x))=
f_1(\varkappa_{a+1}(u^{ae}))f(\varkappa_{a+1}(x))=0$
as required.

It follows that lifts of windows from 
$\BBB_a$ to $\BBB_{a+1}$ or from 
$\FFF_{R_a}$ to $\FFF_{R_{a+1}}$ are both
classified by lifts of the Hodge filtration
in the same way.
Since $\varkappa_{1}$ is crystalline by \cite{Zink-Windows}, 
by induction $\varkappa_{a}$ is crystalline for all finite $a$,
so $\varkappa_{\infty}$ is crystalline
by passing to the projective limit.
\end{proof}

\begin{Remark}
Along the same lines one can show directly that
$\varkappa_{1}$ is crystalline.
Namely, for each $\underline n=(n_1,\ldots, n_{r+1})$
with $n_i\in\NN\cup\infty$ let 
$$
S_{\underline n}=W(k)[[t_1,\ldots,t_{r+1}]]/
(t_1^{n_1},\ldots,t_{r+1}^{n_{r+1}})
$$
with $t_i^\infty=0$. 
Let $I_{\underline n}=pS_{\underline n}$
and $R_{\underline n}=S_{\underline n}/I_{\underline n}$.
We have a frame
$$
\CCC_{\underline n}=
(S_{\underline n},I_{\underline n},R_{\underline n},f,f_1)
$$
where $f(t_i)=t_i^p$ and $f_1(x)=f(x/p)$ 
for $x\in I_{\underline n}$,
and compatible morphisms of frames 
$\varkappa_{\underline n}:\CCC_{\underline n}
\to\FFF_{R_{\underline n}}$. 
Mutatis mutandis the proof of Theorem \ref{ThMain}
shows that these are all crystalline;
in the initial case $\underline n=(1,\ldots 1)$ 
the morphism $\varkappa_{\underline n}$ is an isomorphism.
In $\FS_1$ we have $f(E)=vp$ for a unit $v$,
and for $\underline n=(\infty,\ldots,\infty,e)$ 
there is a $v^{-1}$-isomorphism of frames
$\CCC_{\underline n}\cong\BBB_1$ compatible with
the respective $\varkappa$'s. The assertion follows.
\end{Remark}

\end{document}